\documentclass[letterpaper, 10 pt, conference]{ieeeconf}  

\IEEEoverridecommandlockouts                              
\usepackage{graphicx} 
\usepackage{amsmath,amssymb}
\usepackage{comment}
\usepackage{algorithm}
\usepackage{algpseudocode}
\usepackage{xcolor}

\usepackage{csquotes}
\usepackage[utf8]{inputenc}
\usepackage[T1]{fontenc}
\usepackage{dsfont}
\newcommand{\Ind}{\mathds{1}}
\usepackage{mathtools}
\usepackage{hyperref}
\let\labelindent\relax
\usepackage{enumitem}
\usepackage{subcaption}
\newtheorem{theorem}{Theorem}
\newtheorem{lemma}{Lemma}
\newtheorem{assumption}{Assumption}

\newtheorem{definition}{Definition}
\newtheorem{remark}{Remark}

\DeclareMathOperator{\Vol}{Vol}

\title{\LARGE \bf
Data-driven Reachable Set Estimation with Tunable Adversarial and Wasserstein Distributional Guarantees*
}

\author{Georgios Pantazis and Michelle S. Chong
\thanks{*This work is funded through the CETPartnership’s ProRES project under the Joint Call 2024, co‑funded by the European Commission (Grant Agreement No. 101069750) and participating national funding organizations.}
\thanks{G. Pantazis and M. S. Chong are with the Dynamics and Control (D\&C) section at the Department of Mechanical Engineering, Eindhoven University of Technology, the Netherlands. 
Email:{\tt\small \{G.Pantazis33, M.S.T.Chong\}@tue.nl}.}%
}

\begin{document}

\maketitle
\thispagestyle{empty}
\pagestyle{empty}

\begin{abstract}

We study finite horizon reachable set estimation for unknown discrete-time dynamical systems using only sampled state trajectories. Rather than treating scenario optimization as a black-box tool, we show how it can be tailored to reachable set estimation, where one must learn a family of sets based on whole trajectories, while preserving probabilistic guarantees on future trajectory inclusion for the entire horizon. To this end, we formulate a relaxed scenario program with slack variables that yields a tunable trade-off between reachable set size and out-of-sample trajectory inclusion over the horizon, thereby reducing sensitivity to outliers. Leveraging the recent results in adversarially robust scenario optimization, we then extend this formulation to account for bounded adversarial perturbations of the observed trajectories and derive a posteriori probabilistic guarantees on future trajectory inclusion. When probability distribution shifts  in the Wasserstein distance occur, we obtain an explicit bound on how gracefully the theoretical probabilistic guarantees degrade. For different geometries, i.e., $p$-norm balls, ellipsoids, and zonotopes, we derive tractable convex reformulations and corroborate our theoretical results in simulation.
\end{abstract}


\section{Introduction}
\subsection{Motivation}
Reachable sets describe the potential states a dynamical system can attain over a finite horizon under admissible uncertainty, initial conditions and inputs, and form the foundation for verification, safety analysis, and constrained control \cite{Althoff2021}. While reachable set estimation is mature for well-modeled systems \cite{Chong2022_ellipsoidal}, many modern applications rely on partially known dynamics or are affected by uncertainty sources difficult to model without additional information. At the same time, trajectory data are increasingly abundant, collected from simulation or experiments. These trends have motivated data-driven reachability methods that infer reachable sets directly from sampled trajectories while retaining guarantees that extrapolate beyond the observed trajectories \cite{Alanwar2023, wang2023svdd}.

Recent work on reachable state estimation spans several complementary approaches. One approach estimates reachable sets directly from data using functional or geometric set estimators, including methods based on Christoffel functions with finite-sample guarantees \cite{devonport2023christoffel} and support vector data description for Lipschitz nonlinear systems \cite{wang2023svdd}. Another approach formulates reachable set estimation as a scenario program \cite{Calafiore2006}, thereby leveraging randomized optimization to learn reachable sets with probabilistic guarantees for unseen trajectories \cite{devonport20a,dietrich2024nonconvex}.  \cite{tebjou2023} studies direct data-driven reachable-set approximation using Christoffel functions and conformal prediction, yielding finite-sample coverage guarantees. The work in \cite{lin2024} studies verification of candidate neural reachable tubes via scenario optimization and conformal prediction.  More recently, the work in \cite{Dietrich2025} studies data-driven reachability through the holdout method, deriving \emph{a posteriori} guarantees for reachable sets and reachable tubes and comparing them with wait-and-judge bounds \cite{Campi2018wait}. In parallel, probabilistic reachability has been studied through stochastic models, including Gaussian processes \cite{griffioen2023gpreach} and learning-based models of human-in-the-loop systems \cite{govindarajan2017data,choi2023data}. More recently, robustness to training--deployment mismatch has been addressed through surrogate modeling and conformal prediction under distribution shift based on KL-divergence \cite{hashemi2024statistical}. 

 Despite this progress, three practical challenges are still rarely addressed simultaneously in reachable set estimation: 1) A tunable trade-off between set tightness and reliability, 2) robustness to perturbed trajectory data, and 3) guarantees that degrade gracefully under distribution shifts based on the Wasserstein distance. This paper addresses these challenges in a unified framework with the scenario approach as its backbone.

\subsection{Main Contributions}

Specifically, our contributions with respect to the related literature are as follows: 
\begin{enumerate}
\item Prior works that leverage scenario optimization for reachable set estimation \cite{devonport20a, dietrich2024nonconvex} rely on scenario programs with hard constraints for the inclusion of observed trajectories. \cite{lin2024} focuses on verification of candidate neural reachable tubes based on the sampling-and-discarding approach \cite{campi2011sampling}, while  \cite{Dietrich2025} develops probabilistic guarantees for reachable sets based on the holdout method, comparing the results with the bounds in wait-and-judge scenario optimization \cite{Campi2018wait}. This paper instead leverages the scenario approach with relaxation \cite{CampiGaratti2021RiskRelaxation} to develop a reachable set estimation framework with auxiliary decision variables that relax selected trajectories. In this way, it bridges \cite{CampiGaratti2021RiskRelaxation} with data-driven reachable set estimation and yields a tunable trade-off between reachable set size and the theoretical guarantees for inclusion of future trajectories over the considered horizon.  
\item Existing reachable set estimation methods, based on scenario optimization, treat the sampled trajectories as absolute truth and provide guarantees only for the nominal distribution \cite{devonport20a,dietrich2024nonconvex,lin2024,Dietrich2025}. Recent robust reachability methods based on robust conformal prediction rely on surrogate modeling and calibration \cite{hashemi2024statistical} to address this. 
We instead leverage results from adversarially robust scenario optimization \cite{Campi_RiskAnalysis_2025} to propose a method for reachable set estimation under  perturbed sampled trajectories. Specifically, we model each sampled trajectory as subject to bounded adversarial perturbations, known or estimated, and derive \emph{a posteriori} bounds on the probability of future trajectory inclusion by solving a robustified version of our problem.  If the probability distribution of the state trajectories shifts upon deployment, but lies in a 1-Wasserstein ball around the nominal  distribution, we obtain an explicit out-of-distribution bound. 
\item Different geometric shapes of candidate reachable sets yield different reformulations. We thus perform a parametric analysis of the size vs. future trajectory inclusion trade-off for $p$-normed balls, ellipsoids and zonotopes. The obtained reformulations are convex and amenable to efficient numerical computation.
\end{enumerate}
 The rest of the paper is organized as follows: In Section II we formulate the problem of learning a reachable set with tunable trade-off between size and future trajectory inclusion. In Section III we develop the framework on adversarially robust reachable set estimation with probabilistic guarantees. In Section IV we derive tractable convex formulations for different set geometries such as $p$-normed balls, ellipsoids, and zonotopes. In Section V, we focus on the distributional robustness of the estimated reachable sets in the  Wasserstein distance. Section VI corroborates our results via simulation studies, Section VII concludes the paper, while Section VIII is the Appendix.
 
 \emph{Notation:}
We consider the index set $\mathcal{T}=\{0,\ldots,T\}$, where $T$ denotes the horizon length, and $ \mathcal{N}=\{1,\ldots,N\}$ is the index set of sample trajectories with $N$ being the number of samples.  For a collection of states, we use the equivalent notation $(x_0^\top, \dots, x^\top_T)^\top=(x_k)_{k \in \mathcal{T}}$, where $x_k \in \mathbb{R}^{n_x}$ is a column vector. We denote the cartesian product among $L$ sets $Y_i$ as $\prod_{i=0}^{L-1} Y_i$. $\mathbb{P}^N = \prod_{i \in \mathcal{N}} \mathbb{P}$ denotes the cartesian product of $N$ copies of the same probability distribution $\mathbb{P}$. $\Vol(B)$ denotes the volume of a set $B$. $A^\top$ denotes the transpose of a matrix $A$, while $\det(A)$ denotes its determinant. $\mathbb{S}^m_{\succ 0}$ denotes the set of positive definite symmetric matrices of dimension $m \times m$. Let $\mathbb{B}_p^{n_x}$ be the $p$-normed ball of dimension $n_x$, and $\|y\|_p$ the $p$-norm of a vector $y$. $\mathbb{R}_+$ is the set of nonnegative reals, and $\mathbf{1}_{n}$ the vector of ones of dimension $n$. The indicator function $\Ind[C]$ takes the value 1 when condition $C$ is true and zero otherwise.

\section{Problem Formulation}
We consider a system with (possibly) unknown dynamics:
\begin{align}
&x_{k+1} = f(x_k), \label{state_dynamics}
\end{align}
where $x_k \in X_k \subseteq \mathbb{R}^{n_x}$ denotes the state of the system and $X_k$ denotes the set in which the state $x_k$ takes values. The function $f$ is a continuous function that maps a state $x_k$ to a unique next state $x_{k+1}$. The concatenation of these states defines a trajectory $x=(x_k)_{k \in \mathcal{T}}$. In (\ref{state_dynamics}), the function $f$ is considered to be unknown and incorporates uncertainty in the state dynamics such as model mismatch or disturbances. 
If the dynamics $f$ are explicitly affected by an uncertain disturbance, a special case of the random operator $f$ is obtained, i.e.,  $f(x_k)=\tilde{f}(x_k, w_k)$, where $w_k \in \mathbb{R}^{n_w}$ is an uncertain disturbance parameter. Note that no assumption on the boundedness of the support set of $w_k$ for each timestep $k \in \mathcal{T}$ is considered, thus allowing for possibly unbounded disturbances.  

Since we have no knowledge of the dynamics, we assume access to $N$ different state trajectories $x^{(i)}=(x^{(i)}_k)_{k \in \mathcal{T}}$, $i \in \mathcal{N}$ obtained from simulations, synthetic data and/or past experiments. For the case  $f(x_k)=\tilde{f}(x_k, w_k)$, these samples represent the trajectories due to different realizations of the initial condition $x_0$ and the disturbance $w_k$, $k \in \mathcal{T}$. The state trajectory is thus modeled as a random vector in $X = \prod_{k \in \mathcal{T}} X_k\subset \mathbb{R}^{(T+1)n_x}$ that satisfies \eqref{state_dynamics} and is drawn from a (possibly) unknown probability distribution $\mathbb{P}$.  We impose the following standing assumption: 
\begin{assumption} \label{assum:iid}

 We have access to $N$ independent sample  trajectories $x^{(i)}$ from the probability distribution $\mathbb{P}$.
 \hfill $\square$
\end{assumption}
 
Note that, in Assumption \ref{assum:iid}, the probability distribution $\mathbb{P}$ is used to describe the entire trajectory vector. As such correlations among the elements of the state trajectories as a result of the dynamics in \eqref{state_dynamics} are allowed.

 \subsection{ Tunable Reachable Set Estimation: Size vs Robustness}
We consider a parametric family of sets to be used to estimate the reachable region for the states $x_k$ at time $k \in \mathcal{T}$ defined by
$\mathcal{R}_k( \theta_k) = \{z_k \in \mathbb{R}^{n_x} : g_k(z_k,\theta_k) \le 0\}$, where $\theta_k \in \Theta_k \subseteq \mathbb{R}^{n_\theta}$, and $g_k: \mathbb{R}^{n_x} \times \mathbb{R}^{n_\theta} \rightarrow \mathbb{R}^q$ is a set of constraints that enforce consistency with the specific choice of reachable set geometry. Depending on this choice, $g_k$ takes different forms and the decisions $\theta_k$ have a different geometric meaning. 
\begin{remark} \label{rem:example}
For example, considering the family of $p$-norm balls for reachable sets, we have that $g_k(z_k, \theta_k)=\|z_k-c_k\|_p-r_k$, where $\theta_k=(c_k,r_k) \in \mathbb{R}^{n_x+1}$ with $c_k$ being the center of the reachable set and $r_k$ its radius. \hfill $\square$
\end{remark} 
We propose the following optimization program over a horizon of length $T$:
\begin{align}
\text{$P_{ N, \rho}$ :}\quad
\min_{\theta \in \Theta, \xi \in \mathbb{R}_{\geq 0}^N}\ & \sum_{k \in \mathcal{T}} \left[ S\bigl(\mathcal{R}_k(\theta_k)\bigr) + \rho \sum_{i \in \mathcal{N}} \xi_{i} \right] \nonumber\\
\text{s.t.}\ & g\bigl(x^{(i)} ,\theta \bigr)\le \xi_{i},\quad \forall i\in \mathcal{N}, \nonumber
\end{align}
where $\theta=(\theta_k)_{k \in \mathcal{T}}$ is a vector of concatenated decision variables for the candidate reachable set, $\xi=(\xi_i)_{i \in \mathcal{N}}$ the vector of relaxation variables and $g$ is obtained by taking the maximum over the horizon, i.e., $g(x, \theta)=\max_{k \in \mathcal{T}} g_k(x_k, \theta_k)$. $S(\mathcal{R}_k(\theta_k))$ denotes an appropriate size proxy of the set $\mathcal{R}_k(\theta_k)$, e.g., its volume $\Vol(\mathcal{R}_k(\theta_k))$. An alternative size proxy  can be the radius for the $p$-normed ball or the scaling factors of an ellipsoid or zonotope (see Section \ref{sec:tractable}). To ensure that our approach is computationally feasible, we will focus on convex reachable sets and their corresponding reformulations.  The optimal solution of program \text{$P_{ N, \rho}$} is denoted by $\theta^\ast_{N, \rho}$ and feeding this solution to the vector-to-set mapping $\mathcal{R}_k(\theta_k)$  produces the estimated reachable sets for each time step $k \in \mathcal{T}$, denoted by $\mathcal{R}^\ast_{k,N}= \mathcal{R}_{k}(\theta^\ast_{k, N, \rho})$. We denote the collection of these sets by $\mathcal{R}^\ast_N=\{\mathcal{R}^\ast_{k,N}\}_{k \in \mathcal{T}}$.
\begin{remark}
Taking the limit $\rho \to \infty$, problem $P_{N,\rho}$ recovers the reachable set formulation in \cite{devonport20a}. Its main advantage is the introduction of relaxation variables $\xi_i$ for each sampled trajectory, which provide additional flexibility. In particular, varying $\rho$ allows one to trade off the size of the estimated reachable set against consistency with the collected state trajectories. Larger values of $\rho$ place greater emphasis on trajectory consistency and less on the minimization of reachable set size. As shown in the subsequent developments, this trade-off also affects the probabilistic guarantees for inclusion of future state trajectories over the horizon. \hfill $\square$
\end{remark}

 Having obtained our optimal collection of reachable sets $\mathcal{R}^\ast_k$ for each $k \in \mathcal{T}$ by solving $P_{N, \rho}$, we now wish to evaluate how well they generalize to new yet unseen state trajectories. Thus, we define the \emph{probability of trajectory exclusion}. 
\begin{definition} \label{prob_viol}
Let  $\mathcal{R}^\ast_{k,N}=\mathcal{R}_{k}(\theta^\ast_{k, N, \rho})$ be the estimated reachable set at time $k \in \mathcal{T}$ obtained from $P_{N, \rho}$.  Then, the probability of trajectory exclusion is defined as:
\begin{align}
\mathbb{V}(\mathcal{R}^\ast_N) := \mathbb{P}\Bigl\{x \in X: \exists \ k \in \mathcal{T}  \text{ such that } x_k \notin \mathcal{R}^\ast_{k,N}\Bigr\}, \nonumber
\end{align}
where $x=(x_k)_{k \in \mathcal{T}}$ is a new yet unseen state trajectory. \hfill  $\square$ 
\end{definition}

The terms trajectory exclusion and trajectory violation are used interchangeably. In words, $\mathbb{V}(\mathcal{R}^\ast_N)$ is the probability that a new trajectory is drawn from $\mathbb{P}$ according to the dynamics in (\ref{state_dynamics}), such that for some timestep $k \in \mathcal{T}$, the state $x_k$ is not contained in the estimated set $\mathcal{R}^\ast_{k,N}$.
Though general, this violation metric does not account for state trajectories, perturbed due to noise or poisoning from adversaries, which can occur upon deployment.

\section{Adversarially Robust Reachable Sets}

 We consider that each sampled state trajectory can be perturbed, due to noise or the presence of an adversary, from its nominal value by a radius $R$ according to a well-defined distance metric. The radius $R$ is considered to be either a designer's choice or an approximation based on adversarial observations, and depicts the trust the designer puts in the sampled trajectory data. As such, the set of adversarial perturbations around a nominal sample $x$ is given by $A_{x}:=\{\tilde{x}: d(x,\tilde{x}) \leq R\},$
where  $d: \mathbb{R}^n \times \mathbb{R}^n \rightarrow \mathbb{R}_{\geq 0}$ is a distance metric such as the $p$-normed difference $d(x, \tilde{x})= \|x-\tilde{x}\|_p$. We thus propose an extension of our reachable set estimation problem $P_{N, \rho}$ that accounts for trajectory perturbations upon deployment: 
\begin{align}
\text{$P_{ N, \rho, A}$ :}\quad
&\min_{\theta \in \Theta, \xi \in \mathbb{R}_{\geq 0}^N}\  \sum_{k \in \mathcal{T}} \left[ S\bigl(\mathcal{R}_k(\theta_k)\bigr) + \rho \sum_{i \in \mathcal{N}} \xi_{i} \right] \nonumber\\
& \text{s.t. } g\bigl(\tilde{x}^{(i)},\theta\bigr)\le \xi_{i},\quad  \forall \tilde{x}^{(i)} \in A_{x^{(i)}}, \ \forall i\in \mathcal{N}. \nonumber
\end{align}
In our setting, we assume that either $A_x$ is known or  an approximation $\hat{A}_x$ can be obtained. More information with regard to methods used to construct such adversarial set approximations can be found in \cite{Campi_RiskAnalysis_2025}. Hence, program $\hat{P}_{ N, \rho, A}$ can be approximated by the program: 
\begin{align}
\text{$\hat{P}_{ N, \rho, A}$ :}\quad
&\min_{\theta \in \Theta, \xi \in \mathbb{R}_{\geq 0}^N}\  \sum_{k \in \mathcal{T}} \left[ S\bigl(\mathcal{R}_k(\theta_k)\bigr) + \rho \sum_{i \in \mathcal{N}} \xi_{i} \right] \nonumber\\
& \text{s.t. } g\bigl(x^{(i,j)},\theta \bigr)\le \xi_{i},\quad  \forall j \in \mathcal{M}_i, \ \forall i\in \mathcal{N}. \nonumber
\end{align}
where $\mathcal{M}_i$ is the set of vertices of a polytopic approximation of the adversarial set $\hat{A}_{x^{(i)}}$.  In case the adversarial set is a known polytope, the approximation is exact. 
Note that due to the auxiliary variables $\xi_i$, an optimizer of $\hat{P}_{ N, \rho, A}$ always exists. Depending on the geometric shapes selected for $\mathcal{R}_k(\theta_k), k \in \mathcal{T}$, which results in different constraints $g(x, \theta)$ (see Remark \ref{rem:example}), solving $\hat{P}_{ N, \rho, A}$ can be computationally challenging. To this end, we impose the following assumption.
\begin{assumption} \label{assum:convex_vol}
For each $k \in \mathcal{T}$, the size proxy $S\!\left(R_k(\theta_k)\right)$ is convex in $\theta_k$. Moreover, for each fixed $x$, the constraint function $g(x,\theta)$ is convex in $\theta$. \hfill $\square$
\end{assumption}
Assumption \ref{assum:convex_vol} ensures that the scenario program $\hat{P}_{N,\rho, A}$ is convex.  However, convexity of the cost and constraints alone does not guarantee uniqueness of the optimizer (see  \cite[Table 1]{mestres2025regularity}). Hence, we assume the following:

\begin{assumption} \label{assum:unique_opt}
Problem $\hat{P}_{ N, \rho, A}$ admits a unique optimal solution $\theta_{N, \rho,A}^\ast$. \hfill $\square$
\end{assumption}

Assumption \ref{assum:unique_opt} implies that a unique reachable set $\mathcal{R}^\ast_{k,N,\rho, A}$ is returned for each time step $k \in \mathcal{T}$. In case multiple solutions exist, the use of a convex tie-break rule, such as the minimum  norm or lexicographic order, can be used to extract a unique solution. Based on this setting, we introduce the concept of \emph{adversarial probability of trajectory exclusion}: 

\begin{definition} \label{prob_viol_adv}
Let  $\mathcal{R}^\ast_{k,N,\rho, A}$ be the unique estimated reachable set at time $k \in \mathcal{T}$ obtained from $\hat{P}_{ N, \rho, A}$ and $\mathcal{R}^\ast_N=\{\mathcal{R}^\ast_{k,N,\rho, A}\}_{k \in \mathcal{T}}$.  Then, the adversarial probability of trajectory exclusion is defined as:
\begin{align}
\mathbb{V}_A(\mathcal{R}^\ast_N) := \mathbb{P}\Bigl\{x : \exists \ \tilde{x} \in A_{x},  
\exists k \in \mathcal{T}  \text{ s.t. } \tilde{x}_k \notin \mathcal{R}^\ast_{k,N,\rho, A}\Bigr\}.   \nonumber
\end{align}
\end{definition}

Note that the collection of reachable sets obtained by solving $\hat{P}_{ N, \rho, A}$ can be written as $\mathcal{R}^\ast_{k, N, \rho, A}= \{x_k \in \mathbb{R}^{n_x}: g_k(x_k, \theta^\ast_{k,N,\rho, A}) \leq 0\}$. We now introduce the notion of adversarial complexity appropriately adapted from \cite{Campi_RiskAnalysis_2025} into our reachable set estimation framework:

\begin{definition} \label{def:adv_com}
Given the collection of state trajectories $\{x^{(i)}\}_{i \in \mathcal{N}}$, define the adversarial complexity $s^\ast_{A}$ as the number of indices
$i\in \mathcal{N}$ such that at least one of the following holds:
\begin{enumerate}
\item[(i)] There exists \( \tilde x\in \hat{A}_{x^{(i)}}\) such that for some $k \in \mathcal{T}$ it holds that \(g_k(\tilde x_k,\theta^\ast_{k, N, \rho, A})>0\);
\item[(ii)] There exists \( \tilde{x}\in \hat{A}_{x^{(i)}}\) such that for some $k \in \mathcal{T}$\ it holds that $g_k(\tilde x_k,\theta^\ast_{k,N, \rho, A})=0$;
\item[(iii)] There exists \( \tilde x \in A_{x^{(i)}}\) such that for some $k \in \mathcal{T}$ it holds that \(g_k(\tilde x_k,\theta^\ast_{k,N, \rho, A})>0\).
\end{enumerate}
\end{definition}
 Definition \ref{def:adv_com} follows by the observation that $\{ x: \exists k \in \mathcal{T} \text{ such that } x_k \notin \mathcal{R}^\ast_{k, N, \rho, A}\}= \{ x: \exists k \in \mathcal{T} \text{ such that } g_k(x_k, \theta_k) > 0\}$ and then invoking Definition 7 in \cite{Campi_RiskAnalysis_2025}. The adversarial complexity $s^\ast_A$ denotes the amount of state trajectories which are important for the construction of the reachable sets across the horizon. In general, a relatively low value of $s^\ast_A$ for a fixed $N$ implies a better ability of the reachable sets to include yet unseen trajectories (see Theorem \ref{thm:1}). Furthermore, we impose the following non-accumulation assumption. 

\begin{assumption} \label{assum:nonaccumulation}
For every decision \(\theta\in\Theta \), it holds that:
\[
\mathbb P\Big\{x\in  X:\ \exists \ \tilde x \in \hat{A}_x \text{ such that } g(\tilde x,\theta)=0\Big\}=0. \hfill \ \  \square
\] 
\end{assumption}  
 Assumption \ref{assum:nonaccumulation}   prevents degeneracies in the scenario problem and is a standard assumption for uncertainties following a continuous probability distribution, where exact equalities occur with probability zero. We now define the  equations for the  violation levels to be used for our probabilistic result in Theorem \ref{thm:1}, based on \cite{Campi_RiskAnalysis_2025} and \cite{CampiGaratti2021RiskRelaxation}.
\begin{definition} \label{def:epsilon_equations}
For each $\nu \in\{0,1,\dots,N-1\}$, consider the polynomial equation in the variable $t\ge 0$:
\begin{equation}\label{eq:eps_poly_nu}
\binom{N}{\nu} t^{\,N-\nu}
-\frac{\beta}{2N}\sum_{i=\nu}^{N-1}\binom{i}{\nu} t^{\,i-\nu}
-\frac{\beta}{6N}\sum_{i=N+1}^{4N}\binom{i}{\nu} t^{\,i-\nu}
=0 .
\end{equation}
Let its two nonnegative roots be $\underline{t}(\nu)\le \overline{t}(\nu)$.
For $\nu=N$, consider
\begin{equation}\label{eq:eps_poly_N_nu}
1-\frac{\beta}{6N}\sum_{i=N+1}^{4N}\binom{i}{N} t^{\,i-N}=0,
\end{equation}
which has a unique nonnegative root $\underline{t}(N)$, and set $\overline{t}(N):=0$.
Define, for all $\nu \in \mathcal{N}$,
$\overline{\epsilon}(\nu):=1-\underline{t}(\nu)$ and
$\underline{\epsilon}(\nu):=\max\{0,\;1-\overline{t}(\nu)\}.$
\end{definition}

Equations (\ref{eq:eps_poly_nu}) and (\ref{eq:eps_poly_N_nu}) analyzed in \cite{CampiGaratti2021RiskRelaxation}, \cite{Campi_RiskAnalysis_2025} establish a connection among the number of samples $N$, the complexity $s^\ast_A$ (written as $\nu$), and the confidence parameter $\beta$, and determine the theoretical violation levels $\underline{\epsilon}$ and $\overline{\epsilon}$. To solve (\ref{eq:eps_poly_nu}) and (\ref{eq:eps_poly_N_nu}) efficiently, a bisection algorithm can be used  \cite[Appendix]{CampiGaratti2021RiskRelaxation}. The following result then holds:

\begin{theorem}  \label{thm:1}
Consider system (\ref{state_dynamics}), its estimated reachable set $\mathcal{R}^\ast_{k,N,\rho,A} := \mathcal{R}_k(\theta^\ast_{k,N,\rho,A})$  
with $\mathcal{R}^\ast_N := \{\mathcal{R}^\ast_{k,N,\rho,A}\}_{k \in \mathcal{T}}$ obtained via $\hat{P}_{N, \rho, A}$ under Assumptions~1-4. Let $s_A^\ast$ be the adversarial complexity from  Definition \ref{def:adv_com}.
Then, for any confidence parameter $\beta\in(0,1)$, it holds that:
\[
\mathbb{P}^N\!\left\{\,\{x^{(i)}\}_{i \in \mathcal{N}} :
\underline{\epsilon}\!\left(s_A^\ast\right)\le \mathbb{V}_A(\mathcal{R}^\ast_N)\le \overline{\epsilon}\!\left(s_A^\ast\right)\right\}\ge 1-\beta,
\]
where the
violation levels $\underline{\epsilon}(\cdot)$ and $\overline{\epsilon}(\cdot)$ are
obtained from (\ref{eq:eps_poly_nu}) and (\ref{eq:eps_poly_N_nu}). \hfill $\square$
\end{theorem}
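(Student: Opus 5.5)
The plan is a reduction: recast $\hat{P}_{N,\rho,A}$ as an instance of the adversarially robust scenario program with relaxation in \cite{Campi_RiskAnalysis_2025}, and then invoke its main risk theorem, which builds on \cite{CampiGaratti2021RiskRelaxation}.

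\textbf{Step 1: set up the abstract uncertainty and constraint.} The uncertainty is the whole trajectory $\delta := x \in X \subset \mathbb{R}^{(T+1)n_x}$, distributed according to $\mathbb{P}$. The decision is $(\theta,\xi)$ and the abstract constraint function is $g(x,\theta)=\max_{k\in\mathcal{T}} g_k(x_k,\theta_k)$. By Assumption~\ref{assum:iid}, the $N$ scenarios are i.i.d. This is the key structural observation. Because $\mathbb{P}$ is a law on entire trajectories, the correlations across time steps induced by \eqref{state_dynamics} play no role. Each trajectory contributes a single (max-type) constraint, with one slack $\xi_i$ per trajectory.

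\textbf{Step 2: check the standing hypotheses.} The cost $\sum_k S(\mathcal{R}_k(\theta_k)) + (T+1)\rho\sum_i\xi_i$ is convex by Assumption~\ref{assum:convex_vol}; the constant factor $T+1$ on $\rho$ is immaterial. The map $\theta\mapsto g(\tilde x,\theta)$ is convex because a pointwise maximum of convex functions is convex. So for each $i$, the robust constraint over the finite vertex set $\mathcal{M}_i$ is a finite family of convex constraints sharing the slack $\xi_i$, which matches the form of \cite{Campi_RiskAnalysis_2025}. Feasibility always holds thanks to the slacks. Uniqueness of the solution is Assumption~\ref{assum:unique_opt}, and non-degeneracy is Assumption~\ref{assum:nonaccumulation}.

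\textbf{Step 3: match the violation event and the complexity.} Using the identity noted after Definition~\ref{def:adv_com},
\[
\{x:\exists \tilde x\in A_x,\ \exists k,\ \tilde x_k\notin\mathcal{R}^\ast_{k,N,\rho,A}\}=\{x:\exists\tilde x\in A_x:\ g(\tilde x,\theta^\ast_{N,\rho,A})>0\}.
\]
Hence $\mathbb{V}_A$ coincides with the adversarial risk of \cite{Campi_RiskAnalysis_2025} evaluated at the returned decision. Likewise, conditions (i)--(iii) of Definition~\ref{def:adv_com} are exactly the three conditions of \cite[Def.~7]{Campi_RiskAnalysis_2025} after the same substitution. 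So $s_A^\ast$ is their complexity. Note that $\mathbb{V}_A$ depends only on $\theta^\ast$ and not on $\xi^\ast$, as in the reference.

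\textbf{Step 4: apply the theorem.} The two-sided bound then follows verbatim from the cited result, with $\underline{\epsilon},\overline{\epsilon}$ given by \eqref{eq:eps_poly_nu}--\eqref{eq:eps_poly_N_nu}.

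\textbf{Main obstacle.} The delicate point is ensuring the reduction is exact. Specifically, the cited theorem must allow the adversarial set to be replaced by its polytopic approximation $\hat A$ inside the program, while the risk is measured with the true $A$. Condition (iii) exists precisely to absorb the mismatch between $\hat A$ and $A$, so I would verify that the cited framework is stated in this generality. If it is not, the fallback is to re-derive the result in two stages:
\begin{itemize}
\item first, apply \cite{CampiGaratti2021RiskRelaxation} to the relaxed program with the compound constraint $\max_{j\in\mathcal{M}_i} g(x^{(i,j)},\theta)$;
\item second, argue that the support indices required for the true-$A$ violation are contained in the set counted by $s_A^\ast$, using the monotonicity of the bound in $\nu$.
\end{itemize}
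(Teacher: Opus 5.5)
Your proposal is correct and follows the paper's proof. The paper rewrites $\mathbb{V}_A(\mathcal{R}^\ast_N)$ as $\mathbb{P}\{x:\exists\,\tilde x\in A_x,\ g(\tilde x,\theta^\ast_{N,\rho,A})>0\}$ using $g=\max_{k\in\mathcal{T}}g_k$ and then invokes Theorem~1 of \cite{Campi_RiskAnalysis_2025}, with your Steps~1--2 only making explicit the hypothesis checks the paper leaves implicit (including the harmless factor $T+1$ on $\rho$), and your $\hat A$-versus-$A$ worry is already absorbed by the cited framework, whose complexity (Definition~7 there) is Definition~\ref{def:adv_com} including condition~(iii), so the fallback is not needed.
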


\emph{Proof}: See Appendix. \hfill $\blacksquare$

 Theorem \ref{thm:1} states that for a number of samples $N$ and a parameter $\beta \in (0,1)$, with confidence at least $1-\beta$, the adversarial probability of trajectory exclusion $\mathbb{V}_A(\mathcal{R}^\ast_N)$ lies between $\underline{\epsilon}(s^\ast_A)$ and $\overline{\epsilon}(s^\ast_A)$, bounds that exhibit a monotonically increasing relation with respect to the adversarial complexity $s^\ast_A$.

\section{Tractable Reformulations} \label{sec:tractable}
In order, for Theorem \ref{thm:1} to be useful, problem $\hat{P}_{N, \rho, A}$ should be tractable. This section derives tractable reformulations of $\hat{P}_{N, \rho, A}$ for different choices of geometric shapes, commonly used in reachable set estimation. 
 We impose one of the following assumptions with regard to the structure of the reachable set:

\begin{assumption} \label{assum:balls}
For each $k \in \mathcal{T}$, the reachable sets are $p$-normed balls, i.e., $\mathcal{R}_k(c_k,r_k) := \{x\in\mathbb{R}^{n_x} : \|x-c_k\|_p \le r_k\}$
with radius $r_k \in \mathbb{R}_{\geq 0}$, and centre $c_k\in\mathbb{R}^{n_x}$.
\hfill $\square$
\end{assumption}

\begin{assumption} \label{assum:ellip}
   For each \(k \in \mathcal{T}\), the reachable sets are ellipsoids, i.e., 
$\mathcal{R}_k(C_k,b_k):=\{x \in \mathbb{R}^{n_x}: \|C_kx+b_k\|_2 \leq 1\}$,
where $C_k \in \mathbb{S}^{n_x}_{\succ 0}$ and $b_k \in \mathbb{R}^{n_x}$. \hfill $\square$
\end{assumption}
\begin{assumption} \label{assum:zono}
 For each $k \in \mathcal{T}$, the reachable sets are zonotopes, i.e., 
$\mathcal{R}_k(c_k,a_k)
:= \Big\{x \in \mathbb{R}^{n_x} : \exists \zeta \in \mathbb{R}^m,\;
x = c_k + G_k \zeta,\; |\zeta| \le a_k \Big\}$,
where $G_k \in \mathbb{R}^{n_x \times m}$ is a generator matrix, $\zeta \in \mathbb{R}^m$ a scaling factor, $c_k \in \mathbb{R}^{n_x}$ the centre and $a_k \in \mathbb{R}^m_{+}$ the half-widths. The inequality $ |\zeta| \le a_k$ is understood component-wise.  \hfill $\square$
\end{assumption}

\subsection{Reformulations based on volume}
Denote the set of indices $\mathcal{K} := \{ (j,k,i) : i \in \mathcal{N},\; k \in \mathcal{T},\; j \in \mathcal{M}_i \}$. Then, the following exact  reformulations are obtained.

\begin{lemma} \label{lem:reformulations}
\begin{enumerate}[label=\roman*)]
\item Under Assumption \ref{assum:balls}, consider $S(\mathcal{R}_k(c_k, r_k))=\Vol(\mathcal{R}_k(c_k, r_k))$. Furthermore,  denote $c=(c_k)_{k \in \mathcal{T}}$, $r=(r_k)_{k \in \mathcal{T}}$, $\xi=(\xi_i)_{i \in \mathcal{N}}$. Then problem $\hat{P}_{ N, \rho, A}$ can be written as:
\begin{equation*}
\left\{
\begin{aligned}
&\min_{
(c, r, \xi) \in \mathbb{R}^{n_x T} \times  \mathbb{R}^T_{\ge 0} \times  \mathbb{R}^N_{\ge 0}
}
\sum_{k \in \mathcal{T}}
\left(
\mathrm{Vol}(\mathbb{B}^{n_x}_p) r_k^{n_x}
+\rho \sum_{i \in \mathcal{N}} \xi_i
\right)
\\
&\mathrm{s.t.}
\left\|x_k^{(i,j)}-c_k\right\|_p \le r_k+\xi_i,
\forall (j ,k, i)\in \mathcal{K}.
\end{aligned}
\right.
\end{equation*}
\item Under Assumption \ref{assum:ellip}, consider $S(\mathcal{R}_k(C_k, b_k))= \Vol(\mathcal{R}_k(C_k, b_k))$ and denote $C=(C_0; \dots; C_T)$, $b=(b_{k})_{k \in \mathcal{T}}$ and $\xi=(\xi_i)_{i \in \mathcal{N}}$. Then,  $\hat{P}_{N, \rho, A}$ can be written as: 
\begin{equation*}
\left\{
\begin{aligned}
&\min_{\substack{
(C, b) \in \prod\limits_{k \in \mathcal{T}}\mathbb{S}^{n_x}_{ \succ 0} \times \mathbb{R}^{n_x},  \\
\xi \in  \mathbb{R}^N_{\ge 0}
}}
\sum_{k \in \mathcal{T}}
\left(
\frac{\mathrm{Vol}(\mathbb{B}_2^{n_x})}{\det(C_k)}
+\rho \sum_{i \in \mathcal{N}} \xi_i
\right)
\\[0.4em]
&\mathrm{s.t.}
\left\|C_k x_k^{(i,j)} + b_k\right\|_2 \le 1+\xi_i,
\forall (j ,k, i)\in \mathcal{K}. 
\end{aligned}
\right.
\end{equation*}
\end{enumerate}

\end{lemma}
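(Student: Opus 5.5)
The plan is to start from $\hat{P}_{N,\rho,A}$ and make three substitutions: write out the size proxy explicitly, turn the maximum-over-horizon constraint into separate per-time-step constraints, and write out the geometry-specific constraint function $g_k$.

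\emph{Step 1 (constraint splitting).} Recall $g(x,\theta)=\max_{k\in\mathcal{T}} g_k(x_k,\theta_k)$. A maximum of finitely many terms is at most $\xi_i$ exactly when every term is at most $\xi_i$. Hence the constraint $g(x^{(i,j)},\theta)\le \xi_i$ for all $j\in\mathcal{M}_i$, $i\in\mathcal{N}$ is equivalent to
\[
g_k\bigl(x^{(i,j)}_k,\theta_k\bigr)\le \xi_i \quad \text{for all } (j,k,i)\in\mathcal{K}.
\]
This equivalence uses no geometric assumptions.

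\emph{Step 2 (balls).} Under Assumption~\ref{assum:balls} we have $g_k(z,\theta_k)=\|z-c_k\|_p-r_k$, as in Remark~\ref{rem:example}. The per-time-step constraint then reads $\|x^{(i,j)}_k-c_k\|_p\le r_k+\xi_i$. For the cost, the map $z\mapsto c_k+r_k z$ sends $\mathbb{B}^{n_x}_p$ onto $\mathcal{R}_k(c_k,r_k)$. Its Jacobian is $r_k I$, so the change-of-variables formula gives $\Vol(\mathcal{R}_k)=\Vol(\mathbb{B}_p^{n_x})\,r_k^{n_x}$. Translation invariance of Lebesgue measure handles the centre $c_k$. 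The decision domain is unchanged: $c_k\in\mathbb{R}^{n_x}$, $r_k\ge0$, $\xi\ge0$.

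\emph{Step 3 (ellipsoids).} Under Assumption~\ref{assum:ellip}, I take $g_k(z,(C_k,b_k))=\|C_kz+b_k\|_2-1$, which gives the constraint $\|C_kx^{(i,j)}_k+b_k\|_2\le 1+\xi_i$. Because $C_k\succ0$, the set is $\mathcal{R}_k=C_k^{-1}(\mathbb{B}_2^{n_x}-b_k)$, the image of a translated unit ball under the linear map $C_k^{-1}$. Its volume is therefore $|\det C_k^{-1}|\Vol(\mathbb{B}_2^{n_x})=\Vol(\mathbb{B}_2^{n_x})/\det(C_k)$, where $\det C_k>0$ by positive definiteness.

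Substituting Steps 1--3 into $\hat{P}_{N,\rho,A}$ gives the two programs stated in the lemma, with the same feasible set and the same objective at every point. The reformulations are therefore exact and share optimizers. There is no real obstacle. The only point needing care is to state the $g_k$ convention for ellipsoids, namely the offset $-1$, since Assumption~\ref{assum:ellip} specifies the set but not $g_k$ itself; any positive rescaling of $g_k$ would change the meaning of $\xi_i$. I would also remark in passing that the cost term $\rho\sum_i\xi_i$, repeated inside the sum over $k$, is simply copied over unchanged.
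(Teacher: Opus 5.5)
Your proposal is correct and follows the paper's proof. The paper likewise substitutes $g(x,\theta)=\max_{k\in\mathcal{T}}\|x_k-c_k\|_p-r_k$ (respectively $\max_{k\in\mathcal{T}}\|C_kx_k+b_k\|_2-1$) into $\hat{P}_{N,\rho,A}$ and uses the volume formulas $\Vol(\mathbb{B}_p^{n_x})\,r_k^{n_x}$ and $\Vol(\mathbb{B}_2^{n_x})/\det(C_k)$; you additionally spell out the splitting of the max-constraint into per-time-step constraints and the change-of-variables justification of the volumes, which the paper leaves implicit.
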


\emph{Proof:} See Appendix.

The exact reformulations in Lemma \ref{lem:reformulations} can still be challenging  to solve in general. To this end, alternative proxies of size are often used to obtain a computationally tractable reachable region. Note that such regions also enjoy the theoretical guarantees of Theorem \ref{thm:1}, though with possibly different adversarial complexity. For example, under Assumption \ref{assum:ellip}, consider the monotone proxy $S(\mathcal{R}_k)=\log(\text{Vol}(\mathcal{R}_k))$ in problem  $\hat{P}_{ N, \rho, A}$ for each $k \in \mathcal{T}$. Then $\hat{P}_{ N, \rho, A}$   can be written as: 
\begin{equation*}
\left\{
\begin{aligned}
&\min_{\substack{
(C, b) \in \prod\limits_{k \in \mathcal{T}}\mathbb{S}^{n_x}_{ \succ 0} \times \mathbb{R}^{n_x}  \\
\xi \in  \mathbb{R}^N_{\ge 0}
}}
\sum_{k \in \mathcal{T}}
\left(
-\log\!\bigl(\det(C_k)\bigr)
+\rho \sum_{i \in \mathcal{N}} \xi_i
\right) \nonumber 
\\  
&\text{s.t.} \left\|C_k x_k^{(i,j)} + b_k\right\|_2 \le 1+\xi_i, 
\forall (j ,k, i)\in \mathcal{K}.
\end{aligned}
\right.
\end{equation*}
The reformulation is based on the fact that $C_k$ is positive definite, which implies that $\det(C_k) > 0$. The logarithm of volume of an ellipsoid then takes the form $\log (\text{Vol}(\mathcal{R}_k(C_k,b_k)))= \log(\text{Vol}(\mathbb{B}^{n_x}_2)/ \det(C_k))=\log(\text{Vol}(\mathbb{B}^{n_x}_2))-\log(\det(C_k))$. Since $\text{Vol}(\mathbb{B}^{n_x}_2)$ is a constant, the optimizer of the problem at hand is not affected by the translation this term imposes. 

\subsection{Reformulations based on  size proxies}

The corresponding proxy reformulations of some common shapes, used in reachable set estimation, are established in the following lemma:

\begin{lemma} \label{lem:2}
\begin{enumerate}[label=\roman*)]
\item Under Assumption \ref{assum:balls},  consider the proxy $S(\mathcal{R}_k)=r_k$ and denote $c=(c_k)_{k \in \mathcal{T}}$, $r=(r_k)_{k \in \mathcal{T}}$, $\xi=(\xi_i)_{i \in \mathcal{N}}$. Then, problem $\hat{P}_{ N, \rho, A}$ can be written as:
\begin{equation*}
\left\{
\begin{aligned}
&\min_{\substack{
(c, r, \xi) \in \mathbb{R}^{n_xT} \times  \mathbb{R}^T_{\ge 0} \times \mathbb{R}^N_{\ge 0}
}}
\sum_{k \in \mathcal{T}}
\left(
r_k+\rho \sum_{i \in \mathcal{N}} \xi_i
\right)
\\ 
& \mathrm{s.t.}
\left\|x_k^{(i,j)}-c_k\right\|_p \le r_k+\xi_i,
\forall (j ,k, i)\in \mathcal{K}. 
\end{aligned}
\right.
\end{equation*}
\item For ellipsoidal sets of the form $\mathcal{R}_k(c_k,s_k)=\{x: \|H_k (x-c_k)\|_2 \leq s_k\}$, where $H_k \in \mathbb{S}^{n_x}_{\succ 0}$ is fixed for each $k \in \mathcal{T}$, we consider the proxy $S(\mathcal{R}_k)=s_k$ and denote $c=(c_k)_{k \in \mathcal{T}}$, $s=(s_k)_{k \in \mathcal{T}}$ and $\xi=(\xi_{i})_{i \in \mathcal{N}}$. Then, $\hat{P}_{ N, \rho, A}$ takes the form:
\begin{equation}\label{eq:ellipsoid-socp}
\left\{
\begin{aligned}
&\min_{\substack{
\{(c, s ,\xi) \in \mathbb{R}^{n_xT} \times  \mathbb{R}^T_{\ge 0} \times \mathbb{R}^N_{\ge 0}
}}
\sum_{k \in \mathcal{T}}
\left(
s_k + \rho \sum_{i \in \mathcal{N}} \xi_i
\right) \nonumber 
\\
& \mathrm{s.t.}
\left\|H_k\!\left(x_k^{(i,j)}-c_k\right)\right\|_2 \le s_k+\xi_i,
\forall (j ,k, i)\in \mathcal{K}. 
\end{aligned}
\right.
\end{equation}
\item Under Assumption \ref{assum:zono}, consider $S(\mathcal{R}_k)= \mathbf{1}_m^\top a_k$ with $G_k \in \mathbb{R}^{n_x \times m}$ fixed, and denote $c=(c_k)_{k \in \mathcal{T}}$, $a=(a_k)_{k \in \mathcal{T}}$. For each $(j,k,i)\in\mathcal K$, let $\zeta_{i,k,j}
=
\begin{bmatrix}
\zeta_{i,k,j,1} & \dots & \zeta_{i,k,j,m}
\end{bmatrix}
\in \mathbb{R}^m$ and consider 
$\zeta := \bigl(\zeta_{i,k,j}\bigr)_{(j,k,i)\in\mathcal K}
$. Then, $\hat{P}_{N, \rho, A}$ takes the form:
\begin{equation}
\label{eq:zonotope-lp}
\left\{
\begin{aligned}
&\min_{ (c, a, \xi,  \zeta) \in \mathbb{R}^{n_xT} \times \mathbb{R}^{mT+N}_{\geq 0} \times \mathbb{R}^{m|\mathcal{M}_i|TN} 
}
\sum_{k\in\mathcal T} \mathbf{1}_m^\top a_k \;+\; \rho \sum_{i\in\mathcal N} \xi_i \nonumber
\\[0.4em]
&\mathrm{s.t.} \  c_k + G_k \zeta_{i,k,j} = x_k^{(i,j)},
\forall (j ,k, i)\in \mathcal{K}, \nonumber \\
& \  -(a_k + \xi_i \mathbf{1}_m)
\le
\zeta_{i,k,j}
\le
a_k + \xi_i \mathbf{1}_m, \forall (j,k,i)\in\mathcal{K}. 
\end{aligned}
\right.
\end{equation}
\end{enumerate}
\end{lemma}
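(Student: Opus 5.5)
The plan is to substitute the specific geometry into the generic constraint of $\hat{P}_{N,\rho,A}$, namely $g(x^{(i,j)},\theta)\le \xi_i$ with $g(x,\theta)=\max_{k\in\mathcal{T}} g_k(x_k,\theta_k)$. Before that I will record one elementary fact: a maximum over $k$ is bounded by $\xi_i$ if and only if every term is, that is, $\max_{k} g_k(x_k^{(i,j)},\theta_k)\le \xi_i \iff g_k(x_k^{(i,j)},\theta_k)\le\xi_i$ for all $k\in\mathcal{T}$. This fact converts the per-trajectory, per-vertex constraint of $\hat{P}_{N,\rho,A}$ into one constraint for each triple $(j,k,i)\in\mathcal{K}$. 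The objective needs no change beyond inserting the chosen proxy $S(\mathcal{R}_k)$, because the $\rho\sum_i\xi_i$ term is inherited verbatim. It then remains, for each geometry, to choose a $g_k$ whose zero sublevel set is $\mathcal{R}_k$ and which carries the slack additively in the stated form.

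For i), I take $g_k(z,\theta_k)=\|z-c_k\|_p-r_k$ as in Remark~\ref{rem:example}. Then $g_k\le\xi_i$ reads $\|x_k^{(i,j)}-c_k\|_p\le r_k+\xi_i$, and the objective becomes $\sum_k(r_k+\rho\sum_i\xi_i)$, with nonnegativity of $r_k$ and $\xi_i$ kept from $\Theta$. For ii), I take $g_k(z,\theta_k)=\|H_k(z-c_k)\|_2-s_k$. This is convex in $(c_k,s_k)$ for fixed $H_k$, so Assumption~\ref{assum:convex_vol} holds, and the constraint becomes the second-order cone constraint $\|H_k(x_k^{(i,j)}-c_k)\|_2\le s_k+\xi_i$. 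In both cases the sublevel set $\{g_k\le 0\}$ is exactly the set given in Assumption~\ref{assum:balls}, respectively in the ellipsoid family of ii), so the reformulation is exact.

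For iii), which I expect to be the main obstacle, there is no explicit closed-form $g_k$, since zonotope membership is defined through an existential variable $\zeta$. My plan is to define the relaxed membership through the zonotope with widths inflated by the slack,
\[
g_k(z,\theta_k)\le \xi \iff \exists\,\zeta\in\mathbb{R}^m:\; z=c_k+G_k\zeta,\; |\zeta|\le a_k+\xi\mathbf{1}_m .
\]
Concretely, $g_k(z,(c_k,a_k))=\min\{t: \exists\zeta,\ z=c_k+G_k\zeta,\ |\zeta|\le a_k+t\mathbf{1}_m\}$, which is a gauge-type function. I need to check three properties of this function. First, it is convex jointly in $(c_k,a_k)$: it is the optimal value of a linear program whose parameters $(c_k,a_k,z)$ enter the constraints affinely, and so it is jointly convex. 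Second, its zero sublevel set is $\mathcal{R}_k(c_k,a_k)$. Third, when $G_k$ does not have full row rank, points outside the range of $G_k$ give $+\infty$; this is harmless, since such points are simply infeasible.

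The remaining step is the lifting argument. Since $\zeta$ appears only in existential form, and the existential quantifiers range independently over the triples $(j,k,i)$, the set of feasible $(c,a,\xi)$ in $\hat{P}_{N,\rho,A}$ equals the projection of the set of feasible $(c,a,\xi,\zeta)$ of the stated LP, where each triple gets its own copy $\zeta_{i,k,j}$. Because the objective does not depend on $\zeta$, the optimal values coincide and the optimizers correspond under this projection. Finally, $-(a_k+\xi_i\mathbf{1}_m)\le\zeta_{i,k,j}\le a_k+\xi_i\mathbf{1}_m$ is just the componentwise form of $|\zeta|\le a_k+\xi_i\mathbf{1}_m$, which completes the argument.
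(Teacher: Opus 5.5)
Your proposal follows the same route as the paper: substitute the geometry-specific $g_k$ into $\hat{P}_{N,\rho,A}$, split $\max_{k}g_k\le\xi_i$ into one constraint per $(j,k,i)\in\mathcal{K}$, and for zonotopes lift with per-triple variables $\zeta_{i,k,j}$. Your explicit LP-value function $g_k$ and the projection argument make rigorous what the paper only asserts in one line. One caveat, shared with the paper: in iii) the stated objective has $\rho\sum_{i\in\mathcal{N}}\xi_i$ outside the sum over $k$, so it is $\hat{P}_{N,\rho,A}$ with $\rho$ replaced by $\rho/|\mathcal{T}|$, and your remark that the penalty term is inherited verbatim holds literally only for i) and ii).
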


\emph{Proof}: See Appendix.
 
\begin{remark}
Note that the penalty parameter $\rho$ should be tuned in accordance with the proxy and geometry used for reachable set estimation, as they directly affect the associated scenario problem. Section VI.A supports this observation by showing that, for zonotopes, larger values of $\rho$ are often required to enforce consistency with the sampled trajectories than for balls or ellipsoids. However, this comes with the benefit of yielding a tighter estimated set. \hfill $\square$
\end{remark}

\section{Robustness to distributional shifts}

We first define a distance metric between distributions. Due to its optimal transport properties \cite{mohajerin2018}, we use the so-called 1-Wasserstein distance defined as follows:

\begin{definition} \label{def:wass_distance}
Let $(\Delta,\mathcal{G})$ be a measurable space endowed with a metric
$d:\Delta\times\Delta\to\mathbb{R}_{+}$, and let $\mathbb{P},\mathbb{Q}$ be probability measures on
$(\Delta,\mathcal{G})$. The $1$-Wasserstein distance between $\mathbb{P}$ and $\mathbb{Q}$ is defined as:
\[
W(\mathbb{P},\mathbb{Q})\;:=\;\inf_{\pi\in\Pi(\mathbb{P},\mathbb{Q})} \int_{\Delta\times\Delta} d(\delta,\delta')\,\pi(d\delta,d\delta'),
\]
where $\Pi(\mathbb{P},\mathbb{Q})$ is the set of all joint probability distributions with marginals $\mathbb{P}$ and $\mathbb{Q}$, respectively. \hfill $\square$
\end{definition}
\begin{definition} \label{def:ball}
The $1$-Wasserstein ball centered at $\mathbb{P}$ with radius $\tilde{\mu}$ is defined as:
\begin{align}
\mathbb{B}_{\tilde{\mu}}(\mathbb{P})
:=\left\{\,\mathbb{Q}\ \text{probability on }(\Delta,\mathcal{G})\;:\;
W(\mathbb{P},\mathbb{Q})\le \tilde{\mu}\,\right\}. \nonumber \hfill \ \  \square
\end{align} 
\end{definition}
We now define the following out-of-distribution probability metric over the Wasserstein ambiguity set.
\begin{definition} \label{def:ood-violation}
Consider a Wasserstein ball $\mathbb{B}_{\tilde{\mu}}(\mathbb{P})$. Then, the out-of-distribution probability of trajectory exclusion for the collection of sets $\mathcal{R}^\ast_N=\{\mathcal{R}^\ast_{k,N,\rho, A}\}_{k \in \mathcal{T}}$ is defined as:
\begin{align}
\mathbb{V}_{\tilde{\mu}, \mathbb{P}}(\mathcal{R}_N^\ast)
:=\;& \!\!\! \sup_{\tilde{\mathbb{P}} \in \mathbb{B}_{\tilde{\mu}}(\mathbb{P})} \!\!\!
\tilde{\mathbb{P}}\!\left\{
x \in \mathcal{X} : \exists k \in \mathcal{T}
\text{ s.t. } x_k \notin \mathcal{R}^\ast_{k,N,\rho,A}
\right\}.
\nonumber
\end{align}
\end{definition}

Based on Definition \ref{def:ood-violation}, the following result then holds:

\begin{theorem} \label{thm:ood-wass}
Consider system (\ref{state_dynamics}) and the estimated collection of reachable sets $\mathcal{R}_{N}^\ast$  obtained by solving $\hat{P}_{ N, \rho, A}$ under Assumptions~1-4. Consider the ball of distributions  $\mathbb{B}_{\tilde{\mu}}(\mathbb{P})$ of radius $\tilde{\mu}>0$ around $\mathbb{P}$ and the radius of sampled trajectory deviations $R>0$. Let $s^\ast_{A}$ be the adversarial complexity from Definition \ref{def:adv_com} and 
$\overline{\epsilon}(\cdot)$ the violation level obtained from (\ref{eq:eps_poly_nu}) and (\ref{eq:eps_poly_N_nu}). Then, for any $\beta\in(0,1)$, with confidence at least $1-\beta$, it holds that:
\begin{align}
\mathbb{V}_{\tilde{\mu}, \mathbb{P}}(\mathcal{R}_N^\ast) \le\ 
\overline{\epsilon}(s^\ast_{A}) + \frac{\tilde{\mu}}{R}. \nonumber 
\end{align}
\end{theorem}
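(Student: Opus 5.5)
The plan is to combine Theorem~\ref{thm:1} with an optimal-transport (Markov-type) argument. Write $E:=\{x: \exists k\in\mathcal{T},\ x_k\notin \mathcal{R}^\ast_{k,N,\rho,A}\}$ for the exclusion set and $E_A:=\{x:\exists \tilde x\in A_x,\ \exists k\in\mathcal{T},\ \tilde x_k\notin\mathcal{R}^\ast_{k,N,\rho,A}\}$ for the adversarial exclusion set, so that $\mathbb{P}(E_A)=\mathbb{V}_A(\mathcal{R}^\ast_N)$. By Theorem~\ref{thm:1}, on an event of $\mathbb{P}^N$-probability at least $1-\beta$ we have $\mathbb{P}(E_A)\le\overline{\epsilon}(s^\ast_A)$. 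All subsequent steps are deterministic, conditional on this event, with the collection $\mathcal{R}^\ast_N$ held fixed. It therefore suffices to show that $\tilde{\mathbb{P}}(E)\le \mathbb{P}(E_A)+\tilde\mu/R$ for every $\tilde{\mathbb{P}}\in\mathbb{B}_{\tilde\mu}(\mathbb{P})$, and then take the supremum.

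The key geometric observation relates $E$ to $E_A$, assuming the metric $d$ in the Wasserstein distance is the same one defining $A_x$. If $y\in E$ and $x$ satisfies $d(x,y)\le R$, then $y\in A_x$. Hence $y$ itself is a perturbation of $x$ that leaves some $\mathcal{R}^\ast_{k,N,\rho,A}$, so $x\in E_A$. Contrapositively, if $y\in E$ and $x\notin E_A$, then $d(x,y)>R$. This says that $E_A$ contains the closed $R$-neighbourhood of $E$ (we need symmetry of $d$ here).

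Next comes the coupling step. Fix $\tilde{\mathbb{P}}$ with $W(\mathbb{P},\tilde{\mathbb{P}})\le\tilde\mu$, and take any coupling $\pi\in\Pi(\mathbb{P},\tilde{\mathbb{P}})$ with $\int d\,\mathrm{d}\pi\le\tilde\mu+\eta$ for arbitrary $\eta>0$. Let $(x,y)\sim\pi$. Then
\[
\tilde{\mathbb{P}}(E)=\pi(y\in E)\le \pi(x\in E_A)+\pi(y\in E,\ x\notin E_A)\le \mathbb{P}(E_A)+\pi\bigl(d(x,y)> R\bigr).
\]
By Markov's inequality, $\pi(d(x,y)>R)\le \frac{1}{R}\int d\,\mathrm{d}\pi\le(\tilde\mu+\eta)/R$. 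Letting $\eta\to0$ (or using the existence of an optimal coupling) gives $\tilde{\mathbb{P}}(E)\le \mathbb{P}(E_A)+\tilde\mu/R$. Taking the supremum over the ball and inserting the bound from Theorem~\ref{thm:1} yields the claim with confidence $1-\beta$.

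The main obstacle is the middle step, where two compatibilities must hold. First, the metric $d$ on trajectory space used in Definition~\ref{def:wass_distance} must coincide with the one defining $A_x$, and I will state this identification explicitly. Second, $\mathbb{V}_A$ is defined with the true set $A_x$, whereas the optimisation uses $\hat A_x$. This is harmless because Theorem~\ref{thm:1} is already stated for $\mathbb{V}_A$ with $A_x$, which is exactly what the neighbourhood argument needs. Measurability of $E_A$ is also needed; I would simply assume it, as is implicit in Definition~\ref{prob_viol_adv}.
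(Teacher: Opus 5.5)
Your proof is correct and has the same structure as the paper's: the upper half of Theorem~\ref{thm:1} bounds $\mathbb{V}_A(\mathcal{R}^\ast_N)$ with confidence $1-\beta$. The deterministic transfer $\tilde{\mathbb{P}}(E)\le\mathbb{P}(E_A)+\tilde\mu/R$ is what the paper imports as Theorem 5 of \cite{Campi_RiskAnalysis_2025}; you prove it directly via the coupling-plus-Markov argument, which also makes explicit the hypothesis the paper leaves implicit, namely that the ground metric in Definition~\ref{def:wass_distance} is the same $d$ that defines $A_x$.
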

\emph{Proof}: We follow similar steps as in the proof of Theorem \ref{thm:1}, and then apply Theorem 5 in \cite{Campi_RiskAnalysis_2025}. \hfill $\blacksquare$

\begin{figure*}[t]
    \centering

    \begin{subfigure}{1\textwidth}
        \centering
        \includegraphics[width=\textwidth]{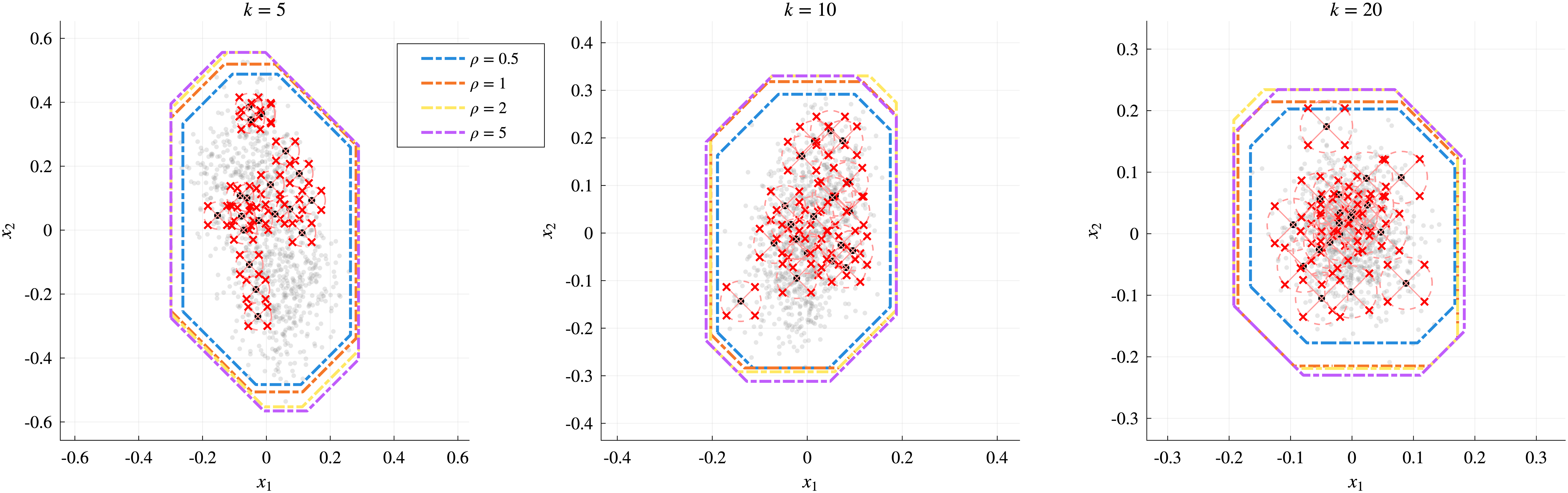}
        \caption{ \footnotesize Adversarially robust reachable set based on zonotopes.}
        \label{fig:reach-zono}
    \end{subfigure}

    \vspace{0.6em}

    \begin{subfigure}{1\textwidth}
        \centering
        \includegraphics[width=\textwidth]{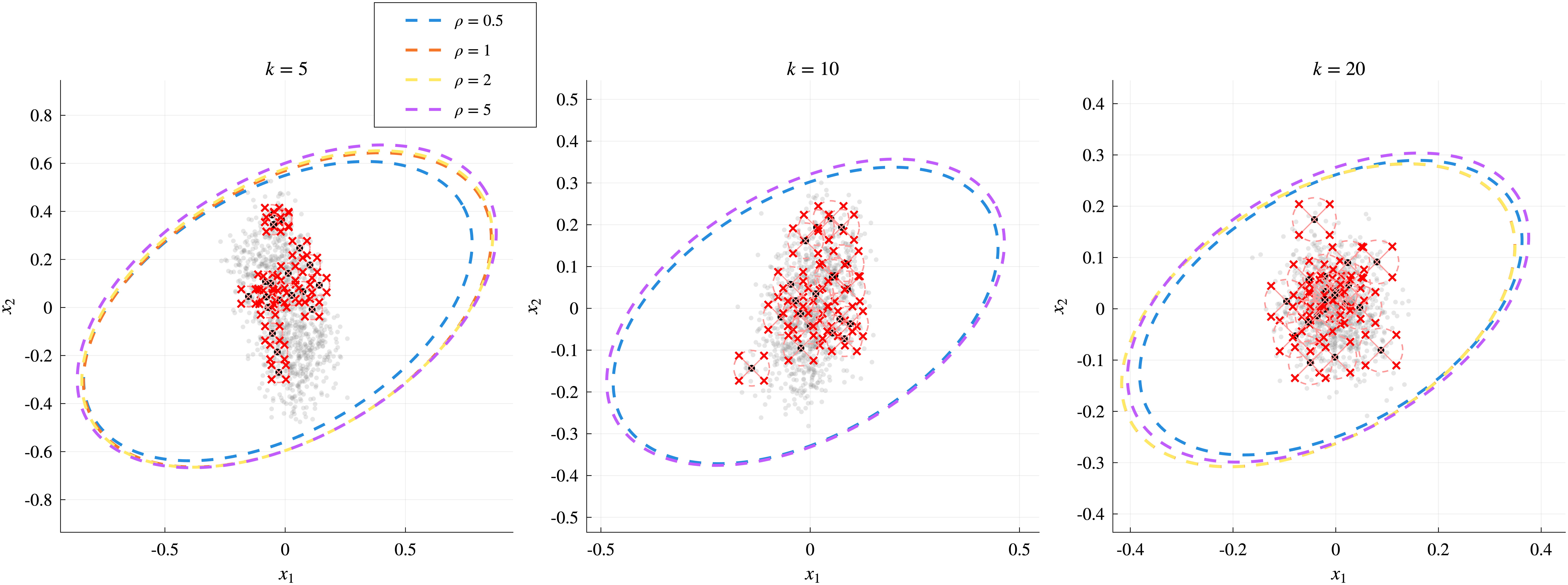}
        \caption{\footnotesize Adversarially robust reachable set based on ellipsoids.}
        \label{fig:reach-elli}
    \end{subfigure}

    \vspace{0.6em}

    \begin{subfigure}{1\textwidth}
        \centering
        \includegraphics[width=1\textwidth]{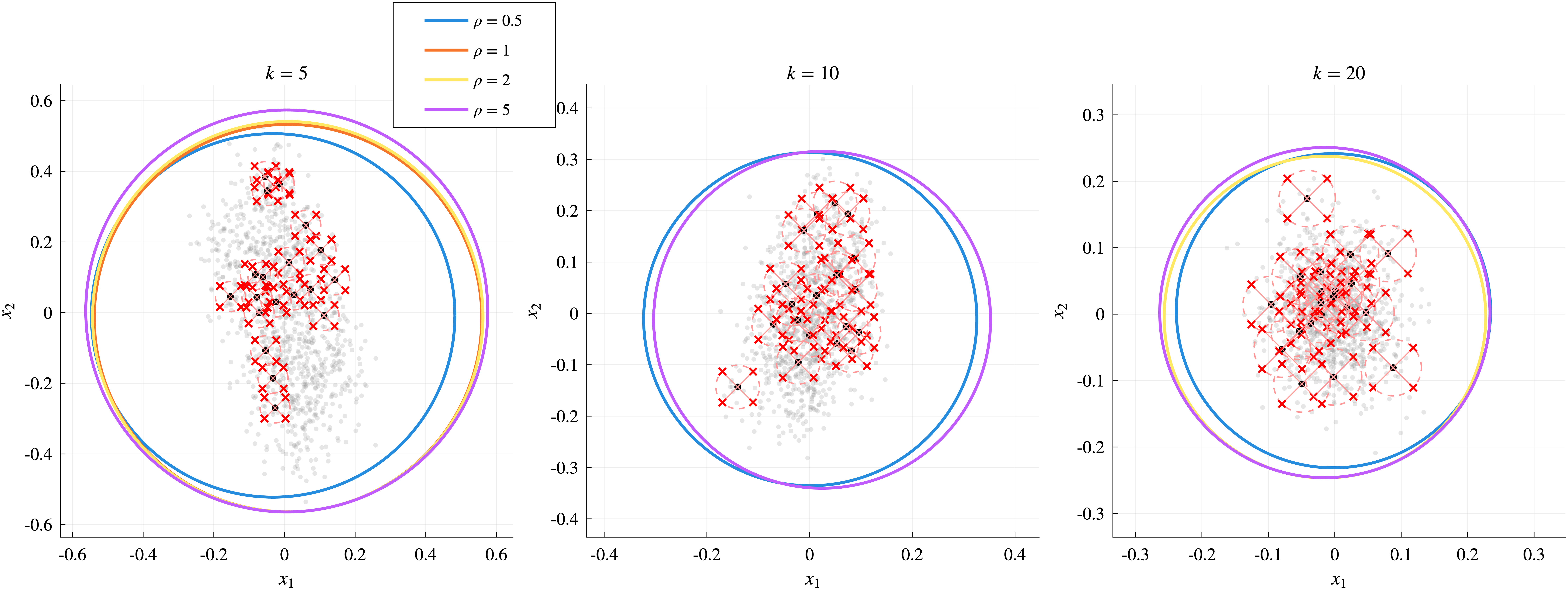}
        \caption{\footnotesize Adversarially robust reachable set based on Euclidean balls.}
        \label{fig:reach-ball}
    \end{subfigure}

    \caption{\footnotesize Adversarially robust data-driven estimation of reachable sets for different geometries, i.e., zonotopes, ellipsoids and Euclidean balls. We consider $T=25$ and illustrate three timestamps at $k\in \{5,10,20\}$.  We learn each set using $N=1000$ different state trajectories, perturbed by the set $\Delta$, thus giving rise to the red cross points which denote adversarial samples. We consider a penalty parameter $\rho \in \{0.5, 1, 2, 5\}$ and illustrate the geometric shapes for these values. Note that as $\rho$ increases, the set increases as well. This is because the relaxation variables of $\hat{P}_{ N, \rho, A}$ are penalized more to satisfy the observed state trajectories. As such, $\rho$ acts as a tuning parameter trading set size for inclusion of future state trajectories.}
    \label{fig:reach-subfigs}
\end{figure*}

Theorem \ref{thm:ood-wass} implies that adversarial robustness against perturbations of state trajectories inherits distributional robustness properties. Note that the guarantees are similar to those of Theorem \ref{thm:1} with an additional term that depends on the ratio between the Wasserstein radius and the adversarial radius. 

\begin{figure}[t]
    \centering
\includegraphics[width=0.49\textwidth,trim=0 0 0 0cm, clip]{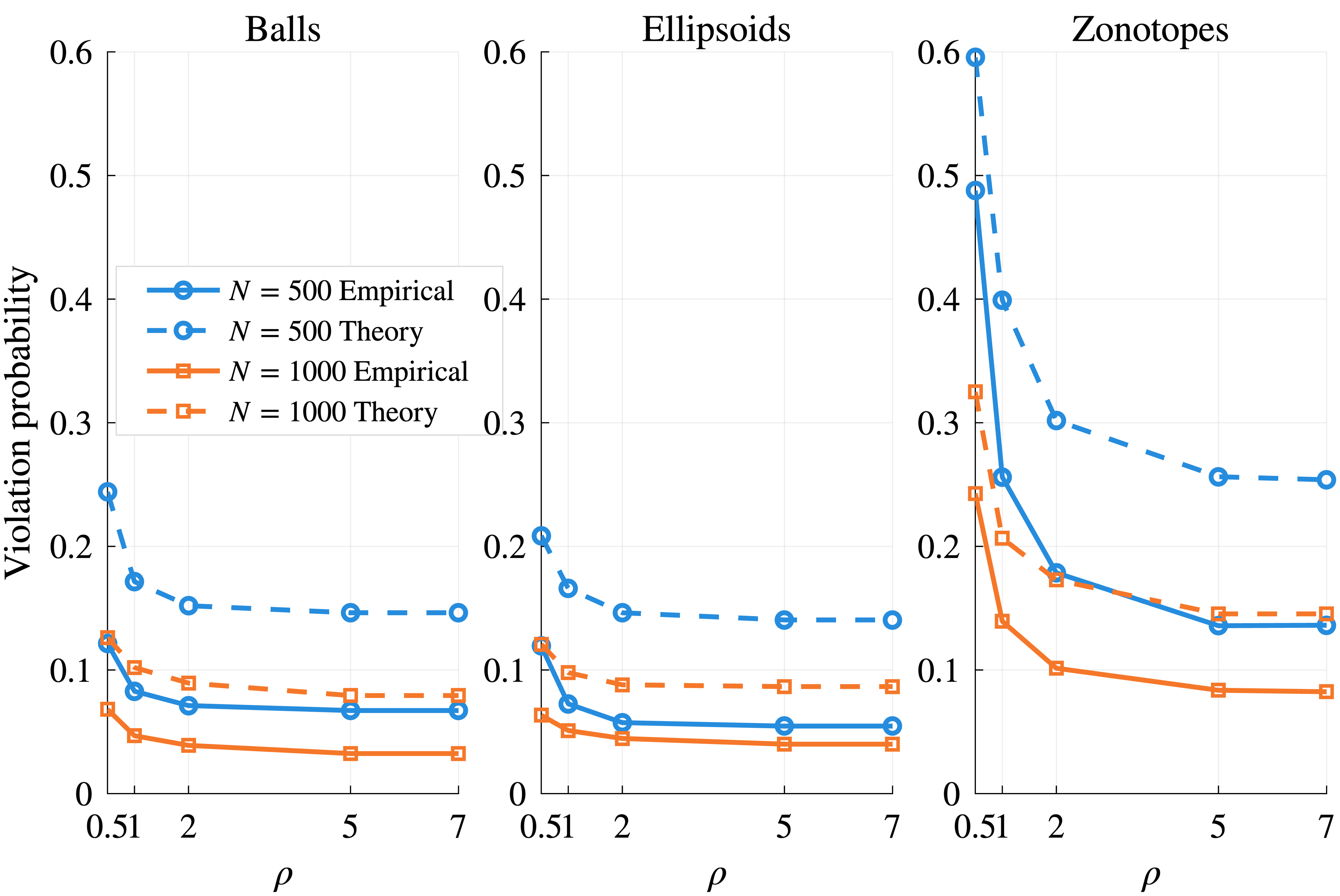}
    \caption{\footnotesize Adversarial theoretical trajectory violation level (dashed lines) vs its empirical counterpart (solid lines) for different geometric shapes. As the number of samples $N$ increases, the theoretical and empirical probabilities improve significantly. Note that zonotopes require a higher number of sampled trajectories to provide the same guarantees with ellipsoids and balls, though at the benefit of tighter zonotopic regions.} 
    \label{fig:adv_theor_vs_empir}
\end{figure}

\section{Numerical Simulations}

\subsection{Adversarially robust reachable set estimation} \label{sub:numerical_example1}

We consider the following stochastic dynamical system:
\begin{equation} \label{eq:example}
x_{k+1} = A x_k + B \phi(C x_k) + w_k,
\end{equation}
where $\phi(z) = a \tanh(z)$ with $a=-0.9$, and the matrices:
\begin{align}
A=\begin{bmatrix}
0.95 & 0.10\\
-0.20 & 0.85
\end{bmatrix},
B=\begin{bmatrix}
0.18\\
0.06
\end{bmatrix},
C=\begin{bmatrix}
1 & 0
\end{bmatrix}. \nonumber
\end{align}

We consider a horizon of length $T=25$. We draw initial conditions $x_0$ from the box $[-0.6, 0.6]\times[-0.45, 0.45]$ and the disturbance $w_k$ from a uniform probability distribution with support $[-0.05, 0.05]^2$. By feeding $w_k^{(i)}$ and $x_0^{(i)}$ through the dynamics, we obtain the sample trajectory $x^{(i)}=\{x_k^{(i)}\}_{k \in \mathcal{T}}.$
Using $N$ such trajectories, we obtain the multi-sample $\{x^{(i)}\}_{i \in \mathcal{N}}$ and we model the presence of possible data perturbations by considering a perturbation to each sampled trajectory, i.e., $
\tilde{x}_k^{(i)} = x_k^{(i)} + \delta$,
where
$\Delta = \{\delta \in \mathbb{R}^2 : \|\delta\|_\infty \le \gamma\}$, with $\gamma=0.03$.
Since $\Delta$ is a rectangle, we can use the vertices to define the set $\mathcal{M}$ in $\hat{P}_{N,\rho, A}$. We now define the empirical adversarial probability of trajectory exclusion as follows:
\begin{align}
\hat{V}_{\mathrm{adv}}
= \frac{1}{N_{\mathrm{test}}} \sum_{m=1}^{N_{\mathrm{test}}}
\Ind[ \max_{k \in \mathcal{T}}\max_{j \in \mathcal{M}}g_k(x_k^{(m,j)}, \theta^\ast_{k, N, \rho, A}) > 0], \nonumber 
\end{align}
where $N_{test}$ is the number of test samples and $\mathcal{M}$ is the index set denoting the vertices of the perturbation set $\Delta$. Leveraging the results from Lemma \ref{lem:2} we solve problem $\hat{P}_{ N, \rho, A}$ for system \eqref{eq:example} for three different geometric shapes, namely, Euclidean balls, ellipsoids and zonotopes. In Figure \ref{fig:reach-subfigs} we set a horizon of $T=25$ and illustrate the learned sets at $k \in \{5,10,20\}$, trained from $N=1000$ state trajectories, including adversarially perturbed samples from $\Delta$ (red crosses). Note that perturbations are considered for each sample. We chose to depict only 20 of them in Figure \ref{fig:reach-subfigs} for illustrative purposes. Shapes are shown for $\rho\in\{0.5,1,2,5\}$. Larger $\rho$ penalizes the relaxation variables more, yielding larger sets that better cover the data. As such, $\rho$ trades set size against out-of-sample performance. Figure \ref{fig:adv_theor_vs_empir} shows the adversarial theoretical and the adversarial empirical probability of violation for different number of samples $N \in \{500,1000\}$ and penalty parameters $\rho \in \{0.5, 1 , 2, 5, 7\}$. Note that the empirical probability of trajectory violation is always below the corresponding theoretical bound.

\subsection{Distributional robustness through adversarial training}
In this section we generate $5$ different experiments using $N=1000$ different trajectory samples per experiment. We consider the same system \eqref{eq:example} as in Section \ref{sub:numerical_example1} with the initial condition $x_0$ now drawn from a Gaussian distribution $\mathcal{N}(\mu_{x_0}, \text{diag}(\sigma_{x_0}) )$ where $\mu_{x_0}=(0, 0)^\top$, and $\sigma_{x_0}=(0.3, 0.225)^\top$ and  $w_k \sim \mathcal{N}(\mu_w, \sigma_w^2 I_2)$, with $\mu_w=(0, 0)^\top$ and $\sigma_w=0.0167$. We now consider $N_{test}=3000$ state trajectories drawn from the perturbed distributions  $\mathcal{N}(\hat{\mu}_{x_0}, \text{diag}(\hat{\sigma}_{x_0}) )$ where $\hat{\mu}_{x_0}=(0.01, -0.01)^\top$, and $\hat{\sigma}_{x_0}=(0.315, 0.23625)^\top$ and  $\hat{w}_k \sim \mathcal{N}(\hat{\mu}_w, \hat{\sigma}_w^2 I_2)$, with $\hat{\mu}_w=(0.002, -0.002)^\top$ and $\hat{\sigma}_w=0.0175$. With these samples, we calculate the out-of-distribution empirical probability of violation and compare it with the theoretical guarantees in Theorem \ref{thm:ood-wass}, where $\tilde{\mu}=0.0243$ is an upper bound that guarantees that the test distributions lie within this Wasserstein ball. The results are depicted in Figure \ref{fig:odd_bounds}. Note that the empirical out-of-distribution probability of trajectory violation is always below the corresponding theoretical trajectory violation bound for all considered geometries. It is important to note that the distributional robustness is a property of adversarial training. Thus, for significantly different distributions, the second term in the bound of Theorem \ref{thm:ood-wass} can increase significantly unless $R$ is tuned differently. Future work will focus on establishing a distributionally robust methodology incorporated in the scenario methodology such that the distributional robustness is explicitly guaranteed. Figure  \ref{fig:relative_size} shows the relative cumulative reachable set size for each geometry, where $\mathrm{Size}_{\mathrm{ball}}(\rho):=\sum_{k\in\mathcal{T}} r_k(\rho)$, $\mathrm{Size}_{\mathrm{ell}}(\rho):=\sum_{k\in\mathcal{T}} s_k(\rho)$, and $\mathrm{Size}_{\mathrm{zono}}(\rho):=\sum_{k\in\mathcal{T}} \mathbf{1}_m^\top a_k(\rho)$. Each quantity is normalized by its corresponding value at $\rho=\rho_0=0.5$. Note that as $\rho$ increases, all geometries interestingly exhibit a similar relative increase in cumulative size.
\begin{figure}[t]
    \centering
    \includegraphics[width=0.5\textwidth,trim=0 0 0 0cm, clip]{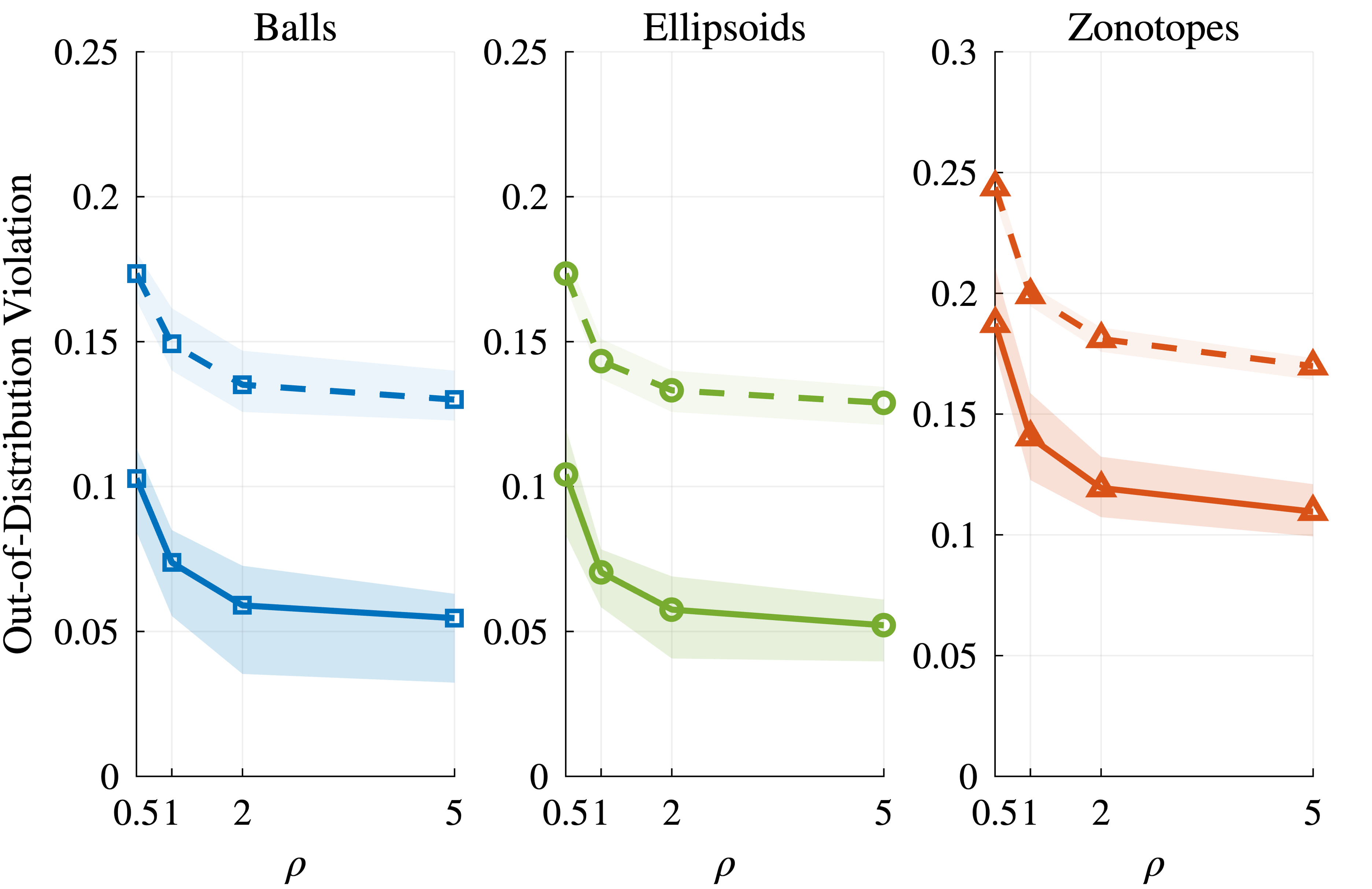}
    \caption{\footnotesize Out-of-distribution theoretical trajectory violation level (dashed lines) and out-of-distribution empirical trajectory violation level for reachable sets of different geometries.} 
    \label{fig:odd_bounds}
\end{figure}

\begin{figure}[t]
    \centering
    \includegraphics[width=0.5\textwidth,trim=0 0 0 0cm, clip]{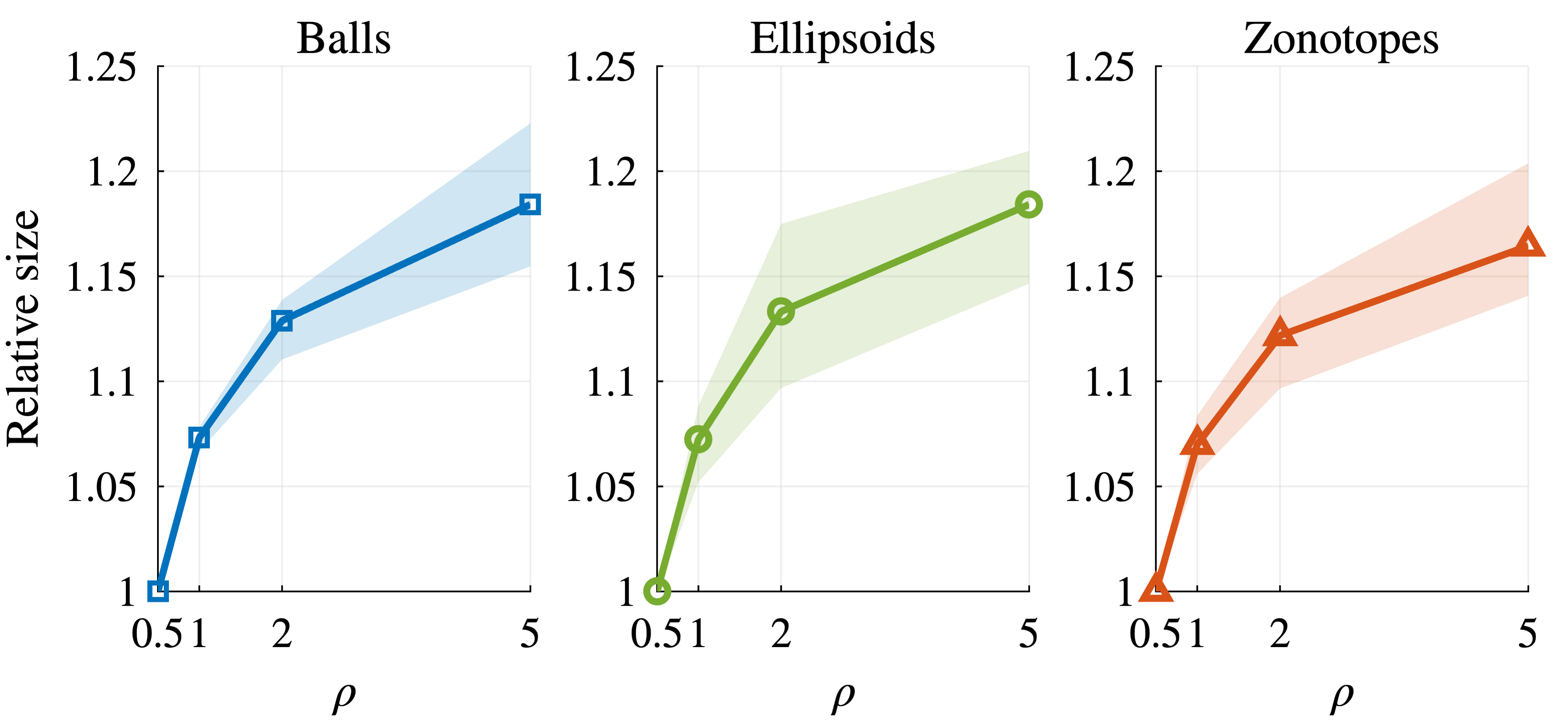}
    \caption{\footnotesize Relative cumulative reachable set size over time for each geometry. Interestingly, as $\rho$ increases, all shapes exhibit similar relative increase in cumulative size.}
    \label{fig:relative_size}
\end{figure}

\section{Conclusion}

We propose a methodology for reachable set estimation, based on recent results in the scenario approach, that learns convex reachable sets directly from trajectory data, explicitly balancing set size and generalization properties of the set through a penalty hyperparameter. Furthermore, we obtain a posteriori robustness certificates to bounded adversarial perturbations and Wasserstein distribution shifts. A key next step is to make distributional robustness explicit by embedding a Wasserstein ambiguity set into the training procedure, rather than relying on robustness inherited from adversarial training.  

\section{Appendix}

\emph{Proof of Theorem 1}: The following equalities hold: 
\begin{align}
&\mathbb{V}_A(\mathcal{R}^\ast_N)
:= \mathbb{P}\Bigl\{x : \exists\, \tilde{x}\in A_{x},\ \exists\, k\in \mathcal{T} \text{ s.t. } \tilde{x}_k \notin \mathcal{R}^\ast_{k,N,\rho,A}\Bigr\} \nonumber \\
&= \mathbb{P}\Bigl\{x : \exists\, \tilde{x}\in A_{x}, \text{such that } \max_{k \in \mathcal{T}}g_k(\tilde{x}_k, \theta^\ast_{k,N, \rho, A}) > 0\} \nonumber \\
&=\mathbb{P}\Bigl\{x : \exists\, \tilde{x}\in A_{x}, \text{such that } g(\tilde{x}, \theta^\ast_{N, \rho, A}) > 0\}.  \nonumber
\end{align}
Application of Theorem 1 in \cite{Campi_RiskAnalysis_2025} concludes the proof.  \hfill $\blacksquare$

\emph{Proof of Lemma 1}: i) The proof follows by setting $\theta=\big((c_k)_{k\in\mathcal{T}};\,(r_k)_{k\in\mathcal{T}}\big)$, $g(x^{(i,j)}, \theta)=\max_{k \in \mathcal{T}} \|x^{(i,j)}_k-c_k\|_p -r_k $ in $\hat{P}_{ N, \rho, A}$ and taking the volume  of the $p$-normed ball $\text{Vol}(\mathcal{R}_k)=\alpha_{n_x,p}r_k^{n_x}$.
ii) The proof follows by setting $g(x^{(i,j)}, \theta)=\max_{k \in \mathcal{T}} \|C_kx^{(i,j)}_k+b_k\|_2-1$ in $\hat{P}_{ N, \rho, A}$ and taking $\text{Vol}(\mathcal{R}_k)=\dfrac{\alpha_{n_x,2}}{\det(C_k)}$.

\emph{Proof of Lemma 2}: i) The proof is similar to the corresponding proof of Lemma \ref{lem:reformulations}. 
ii) If $H_k$ is fixed for all $k$,  setting $g(x, \theta)= \max_{k \in \mathcal{T}}\|H_k(x^{(i,j)}_k-c_k)\|_2-s_k$ yields the desired reformulation. iii)  For each time \(k\), fix a generator matrix \(G_k\in\mathbb{R}^{n_x\times m}\) .  
We model the reachable set as the zonotope $\mathcal{R}_k(c_k,a_k)=\{x\in\mathbb{R}^{n_x}:\exists \zeta\in\mathbb{R}^m,\ x=c_k+G_k\zeta,\ |\zeta| \le a_k\}$,
with center \(c_k\in\mathbb{R}^{n_x}\) and half-widths \(a_k\in\mathbb{R}_+^m\).
Introducing the auxiliary variables \(\zeta_{i,k,j}\in\mathbb{R}^m\), then the above set is equivalent to satisfying the constraints $c_k + G_k \zeta_{i,k,j} = x_k^{(i,j)}$ and
$-\,(a_k+\xi_i \mathbf{1}_m)\le \zeta_{i,k,j}\le a_k+\xi_i \mathbf{1}_m$ componentwise. \hfill $\blacksquare$
\bibliographystyle{IEEEtran}
\bibliography{biblio}
\end{document}